\theoremstyle{definition}
\newtheorem{definition}{Definition}[section]
\newtheorem{theorem}{Theorem}[section]
\newtheorem{lemma}{Lemma}[section]
\newtheorem{conjecture}{Conjecture}[section]
\newtheorem{corollary}{Corollary}[section]
\newtheorem{proposition}{Proposition}[section]
\title{Goldbach's Conjecture and Euler's $\phi$-Function}
\author{Felix Sidokhine}
\date{}                                           
\begin{document}
\maketitle

\begin{abstract}
In this paper we propose an alternative formulation of the binary and ternary Goldbach conjectures as the systems of equations involving the Euler $\phi$-function.
\end{abstract}

\section{Introduction}

Goldbach's conjecture is known as $2n = p + q$ where $n > 1$ is any integer, $p$ and $q$ are primes.  As one of a possible approach to solving (or at least reformulating) the problem is also a statement regarding an equation involving $\phi$-function (Erdos, Moser and etc.). We propose an additional reformulation of the Goldbach conjecture as a system of equations involving $\phi$-function.
 
\section{Arithmetical Functions}

An invariant characteristic any element $a$ belonging to a set of natural numbers $\mathbb{N}$ is a number of the prime divisors contained in its representation. Let $\pi$ denote a set of all prime numbers of $\mathbb{N}$.

\begin{definition}
$\nu_p: \mathbb{N} \to \mathbb{Z}_+ = \{n \in \mathbb{Z} | n \geq 0\}$ where $p$ is a prime acts on any element $a$ belonging to $\mathbb{N}$ as follows: $\nu_p (a)= \alpha \text{ if } p^\alpha \mid \mid a \text{ and } \nu_p (a)=0 \text{ if } p \nmid a$.
\end{definition}

\begin{definition}
$\nu: \mathbb{N} \to \mathbb{Z}_+$, where the mapping $\nu$ acts on any element $a$ belonging to $\mathbb{N}$ as follows:  $\nu(a)=\sum_{p\in\pi} \nu_p(a)$. The sum $\sum_{p\in\pi} \nu_p(a)$ consists of a finite number of terms. 
\end{definition}

\begin{definition}
$\phi: \mathbb{N} \to \mathbb{N}$, where the mapping $\phi$ acts on any element $a$ belonging to $\mathbb{N}$ as follows:  $\phi(a)$ equals the number of integers among $1,2,…,a$ which are prime to $a$, \cite{Chandrasekharan:1968aa}.

Let us notice some properties of these mappings: $\nu(a) = 0 \iff a = 1$ ;  $\nu(a) = 1 \iff a \text{ is prime}$; $\phi(a) = a - 1\iff \nu(a) = 1$. 
\end{definition}

\section{Prime Conjectures and  Euler's Totient Function}
                                                                                                                                                                                                                                                                                                                                                                                                                                                                                                                                                                                                                                                                                                                                                                                                                                                                                                                                                                                                                                                                                                                                                                                                                                                                                                                                                                                                                                                                                                                                                                                                                                                                                                                                                                                                                                                                                                                                                                                                                                                                                                                                                                                                                                                                                                                                                                                                                                                                                                                                                                                                                                                                                                                                                                                                                                                                                                                                                                                                                                                                                                                                                                                                                                                                                                                                                                                                                                                                                                                                                                                                                                                                                                                                                                                                                                                                                                                                                                                                                                                                                                                                                                                                                                                                                                                                                                                                                                                                                                                                                                                                                                                                                                                                                                                                                                                                                                                                                                                                                                                                                                                                                                                                                                                                                                                                                                                                                                                                                                                                                                                                                                                                               
\subsection{The Bertrand Postulate}
\hfill\\\\
Bertrand's postulate: Between any two integers $n$ and $2n-2$, $n > 3$, there is a prime.

For Bertrand's postulate it is possible the following alternative form:

\begin{lemma}
Between any two integers $n$ and $2n - 2$, $n > 3$, there is a prime if and only if there exists such an integer $x$ satisfying an inequality $0<x<n-2$ that $\nu(n + x) = 1$.
\end{lemma}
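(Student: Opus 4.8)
The plan is to recognize that this lemma is a direct translation of Bertrand's postulate into the language of the $\nu$-function, so that the proof reduces to unpacking the definitions and matching the ranges of the two existence quantifiers. The essential tool is the property recorded after Definition~2.3, namely that $\nu(a) = 1$ if and only if $a$ is prime. Under the substitution $m = n + x$, the condition $\nu(n+x) = 1$ becomes ``$m$ is prime,'' and the inequality $0 < x < n-2$ becomes ``$n < m < 2n-2$,'' so the two sides of the biconditional express the same assertion. I would prove both implications by exhibiting this correspondence explicitly.

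First I would establish the forward direction. Assuming there is a prime strictly between $n$ and $2n-2$, call it $m$, so that $n < m < 2n-2$, I would set $x = m - n$. Then $x$ is an integer, the inequalities $n < m$ and $m < 2n-2$ give $0 < x < n-2$, and $n + x = m$, so $\nu(n+x) = \nu(m) = 1$ since $m$ is prime. This produces the required $x$. For the converse, assuming an integer $x$ with $0 < x < n-2$ and $\nu(n+x) = 1$, I would set $m := n+x$; by the cited property $m$ is prime, and from $0 < x$ and $x < n-2$ I obtain $n < m < 2n-2$, so $m$ is a prime lying strictly between $n$ and $2n-2$, which is exactly the postulate for this $n$.

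The only part demanding care is the bookkeeping of the endpoints: one must confirm that the strict inequalities on the two sides align under $x \mapsto n+x$, so that $x \in \{1, \ldots, n-3\}$ parametrizes precisely the integers $m \in \{n+1, \ldots, 2n-3\}$ with no off-by-one discrepancy at the boundaries. Once this bijection between the two sets of integers is verified, the two existence statements are logically equivalent and the proof is complete. I do not anticipate any genuine obstacle beyond this boundary check, since the content of the lemma is purely the reformulation of primality via $\nu(a)=1$ rather than any new arithmetic input.
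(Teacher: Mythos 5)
Your proof is correct: the substitution $m = n + x$ together with the recorded property ($\nu(a) = 1$ if and only if $a$ is prime) yields exactly the claimed equivalence, and your boundary check that $x \in \{1, \ldots, n-3\}$ parametrizes $m \in \{n+1, \ldots, 2n-3\}$ is accurate. The paper states this lemma without any proof, treating precisely this definitional unpacking as immediate, so your argument is the intended one.
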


Due to lemma 3.1, Bertrand's postulate is equivalent the following statement, namely: 

\begin{theorem}
The equation $\phi(n+x)+1= n+x$ has an integral solution $x$, satisfying an inequality $0<x<n-2$, for every integer $n> 3$ if and only if a system of the Fermat congruences $(n+x)^{\phi(p_i)} \equiv 1 \mod p_i$, where $p_i$ runs a set of all primes $p \leq [\sqrt{n+x}]< \sqrt{n+x} \leq \sqrt{2n-3}$ has a solution $x$, $0<x<n-2$, for every integer $n> 3$.
\end{theorem}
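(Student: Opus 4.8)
The plan is to prove the stronger pointwise statement that, for each fixed integer $n > 3$ and each integer $x$ with $0 < x < n-2$, the equation $\phi(n+x)+1 = n+x$ holds if and only if the Fermat system holds, since both are equivalent to the single condition that $n+x$ be prime. Once this pointwise equivalence is in hand, the universally quantified biconditional stated in the theorem is immediate: the two solution sets coincide for every $n$, so for each $n$ one is nonempty exactly when the other is.

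First I would dispose of the left-hand equation. By the property recorded in Definition 2.3, namely $\phi(a) = a-1 \iff \nu(a) = 1 \iff a$ is prime, the equation $\phi(n+x)+1 = n+x$ is equivalent to $\phi(n+x) = (n+x)-1$, hence to $n+x$ being prime. This is exactly the reformulation already exploited in Lemma 3.1, so it needs no further argument. Next I would analyze the Fermat system. Since each modulus $p_i$ is itself prime and $\phi(p_i) = p_i - 1$, Fermat's little theorem gives $(n+x)^{\phi(p_i)} \equiv 1 \pmod{p_i}$ whenever $p_i \nmid (n+x)$; conversely, if $p_i \mid (n+x)$ then $(n+x)^{p_i-1} \equiv 0 \pmod{p_i}$, and $0 \not\equiv 1 \pmod{p_i}$, so the congruence fails. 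Thus the $i$-th congruence holds precisely when $p_i \nmid (n+x)$, and the whole system holds precisely when $n+x$ has no prime divisor $p_i \leq [\sqrt{n+x}]$. By the elementary trial-division criterion — an integer $m \geq 2$ with no prime divisor $\leq \sqrt{m}$ is prime — together with $n+x > n > 3$, the system is equivalent to $n+x$ being prime. Combined with the previous step, this shows that in the range $0 < x < n-2$ both conditions cut out the same set of $x$, which proves the theorem.

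The only point demanding genuine care is the converse use of the Fermat congruence. I want to emphasize that no Fermat pseudoprimality test is being applied to $n+x$ itself; the congruences are taken modulo the \emph{genuinely prime} numbers $p_i$, for which $a^{p_i-1} \equiv 1 \pmod{p_i}$ is equivalent to $p_i \nmid a$ with no Carmichael-type exceptions. The remaining bookkeeping is routine and follows directly from $0 < x < n-2$ and $n > 3$: the bound $[\sqrt{n+x}]$ is well defined for each candidate $x$ even though it depends on $x$; from $x \leq n-3$ one gets $n+x \leq 2n-3$, so the stated estimate $\sqrt{n+x} \leq \sqrt{2n-3}$ holds; and $n+x \geq 5 \geq 2$, so the trial-division criterion applies without the degenerate $m = 1$ case.
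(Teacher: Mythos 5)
Your proposal is correct and takes essentially the same route as the paper's proof: both reduce each side of the biconditional to the primality of $n+x$, using Fermat's little theorem (applied to the genuinely prime moduli $p_i$) for one direction and the trial-division criterion for the other. You simply make explicit what the paper leaves implicit, namely that the congruence modulo $p_i$ fails exactly when $p_i \mid (n+x)$, and the routine bound checks such as $n+x \leq 2n-3$.
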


\begin{proof}
Let $x_0$, $0<x_0<n-2$, be a solution of the equation $\phi(n+x)+1= n+x$ hence $n + x_0$ is a prime. Therefore $x_0$, $0<x_0 <n-2$, is a solution of the system of Fermat's consequences $(n+x)^{\phi(p_i)} \equiv 1 \mod p_i$, where $p_i$ runs a set of all primes $p \leq [\sqrt{n+x_0 }]<\sqrt{n+x_0}$. Now let $x_0$, $0<x_0 <n-2$, is a solution of a system of consequences $(n+x)^{\phi(p_i)} \equiv 1 \mod p_i$ where $p_i$ runs a set of all primes $p \leq [\sqrt{n+x_0 }]<\sqrt{n+x_0}$ then, according to trial division, $n + x_0$ is a prime and $x_0$, $0<x_0<n-2$, be a solution of the equation $\phi(n+x)+1= n+x$.
\end{proof}

Due to Tchebycheff's theorem, \cite{Sierpinski:1988aa}, and theorem 3.1 the following statement is true:

\begin{theorem}
A system of the Fermat congruences $(n+x)^{\phi(p_i)} \equiv 1 \mod p_i$, where $p_i$ runs a set of all primes $p \leq [\sqrt{n+x}]<\sqrt{n+x}\leq \sqrt{2n-3}$ has an integral solution $x$, $0<x<n-2$, for every integer $n>3$. A number of solutions of this system of congruences is equal to $\pi(2n-2)-\pi(n)$.
\end{theorem}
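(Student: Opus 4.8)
The plan is to reduce both assertions---the existence of a solution and the exact count---to counting the primes lying strictly between $n$ and $2n-2$, using Theorem 3.1 as the bridge. By Theorem 3.1 the Fermat system $(n+x)^{\phi(p_i)} \equiv 1 \pmod{p_i}$, with $p_i$ ranging over all primes $\leq [\sqrt{n+x}]$, has a solution $x$ in $0 < x < n-2$ if and only if the equation $\phi(n+x)+1 = n+x$ has such a solution; and by the properties recorded after Definition 2.3, $\phi(m) = m-1 \iff \nu(m) = 1 \iff m$ is prime. Hence a value $x$ with $0 < x < n-2$ solves the system precisely when $n+x$ is prime. I would make this equivalence fully explicit by noting that, since $\phi(p_i) = p_i - 1$, Fermat's little theorem gives $(n+x)^{p_i-1} \equiv 1 \pmod{p_i}$ exactly when $p_i \nmid (n+x)$; so the system asserts that no prime $p_i \leq \sqrt{n+x}$ divides $n+x$, which by trial division means $n+x$ is prime (the strict inequality $[\sqrt{n+x}] < \sqrt{n+x}$ being automatic for a prime, since no prime is a perfect square).

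For the existence statement, I would appeal directly to Tchebycheff's theorem in the form of Bertrand's postulate quoted above: for every integer $n > 3$ there is a prime $p$ with $n < p < 2n-2$. Writing $x = p - n$, we have $0 < x < n-2$ and $n + x = p$ is prime, so by the reduction $x$ solves the Fermat system. This settles solvability for all $n > 3$.

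For the count, I would set up the bijection $x \mapsto n+x$ between the integers $x$ with $0 < x < n-2$ and the integers $m$ with $n < m < 2n-2$; under the reduction above, the solutions of the system correspond exactly to the primes $m$ in the open interval $(n, 2n-2)$. The number of such primes is $\pi(2n-3) - \pi(n)$, and since $2n-2 = 2(n-1)$ is even and hence composite for $n > 3$, we have $\pi(2n-2) = \pi(2n-3)$. Therefore the number of solutions equals $\pi(2n-2) - \pi(n)$, as claimed.

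The arguments are largely routine once Theorem 3.1 is in hand; the points demanding the most care are the boundary bookkeeping in the count---verifying that neither endpoint $n$ nor $2n-2$ contributes a prime, which is why the distinction between the open and half-open interval collapses and the even-number observation is needed---and the precise justification that the Fermat system is equivalent to primality rather than merely to coprimality with the small primes, that is, that trial division up to $\sqrt{n+x}$ genuinely certifies primality. Everything else follows by direct substitution.
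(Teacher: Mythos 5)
Your proposal is correct and follows exactly the route the paper intends: it deduces the theorem from Theorem 3.1 (the equivalence of the Fermat system with primality of $n+x$) together with Tchebycheff's theorem in the form of Bertrand's postulate, which is precisely the one-line justification the paper gives. Your added bookkeeping---the bijection $x \mapsto n+x$ onto the interval $(n, 2n-2)$ and the observation that $\pi(2n-2) = \pi(2n-3)$ since $2n-2$ is composite---merely makes explicit the counting step the paper leaves implicit.
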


An expression $\pi(2n-2) - \pi(n)$ has gotten purely algebraic interpretation as a number of solutions of a system of Fermat's congruences $(n+x)^{\phi(p_i)} \equiv 1 \mod p_i$ where $x$ satisfies an inequality $0<x<n-2$ and a number of the congruences depends on variable $x$ and varies between $\pi(\sqrt{n})$ and $\pi(\sqrt{2n-3})$.

\subsection{The Goldbach Conjecture}

\subsubsection{The Binary Goldbach Conjecture}
\hfill\\\\
The binary Goldbach conjectures: Every even integer $2n$, $n>1$, is a sum of two primes.

For the binary Goldbach conjecture it is possible the following alternative form:

\begin{lemma}
Every even integer $2n$, $n>1$, is a sum two primes if and only if there is such an integer $x$ with a condition $2n+1<x<4n-1$ that $\nu((x-2n)(4n-x))=2$.
\end{lemma}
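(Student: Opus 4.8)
The plan is to reduce the condition $\nu((x-2n)(4n-x)) = 2$ to a Goldbach decomposition of $2n$ by a linear change of variables. First I would set $u = x - 2n$ and $v = 4n - x$, and immediately record the identity $u + v = (x-2n) + (4n-x) = 2n$. Translating the range hypothesis, the strict inequalities $2n+1 < x < 4n-1$ are equivalent to $1 < u$ together with $1 < v$; since $u$ and $v$ are integers this means precisely $u \geq 2$ and $v \geq 2$ (the upper bounds $u, v < 2n-1$ then follow automatically from $u + v = 2n$). The strictness of both endpoints is exactly what rules out the degenerate cases $u = 1$ and $v = 1$, which I expect to be the only genuinely delicate point in the argument.

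Next I would invoke the complete additivity of $\nu$. From $\nu_p(uv) = \nu_p(u) + \nu_p(v)$ for every prime $p$ and the definition $\nu(a) = \sum_{p \in \pi} \nu_p(a)$, summation over $p$ yields $\nu(uv) = \nu(u) + \nu(v)$. Because $u, v \geq 2$, each of $u$ and $v$ carries at least one prime factor, so $\nu(u) \geq 1$ and $\nu(v) \geq 1$, whence $\nu(uv) \geq 2$. Consequently $\nu((x-2n)(4n-x)) = \nu(uv) = 2$ holds if and only if $\nu(u) = \nu(v) = 1$, which by the property recorded in Section 2 (namely $\nu(a) = 1$ exactly when $a$ is prime) means that both $u$ and $v$ are prime.

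It then remains to match this with the Goldbach statement, which I would do by exhibiting the correspondence in both directions. For the forward direction, if $2n = p + q$ with $p, q$ prime, then $2 \leq p, q \leq 2n-2$, so setting $x = 2n + p$ gives $u = p$, $v = q$, places $x$ in the admissible range $2n+1 < x < 4n-1$, and produces $\nu((x-2n)(4n-x)) = \nu(pq) = 2$. Conversely, any admissible $x$ with $\nu((x-2n)(4n-x)) = 2$ yields, by the previous paragraph, primes $u = x-2n$ and $v = 4n-x$ with $u + v = 2n$, i.e. a representation of $2n$ as a sum of two primes. The main obstacle here is not depth but careful bookkeeping: one must confirm that the open interval for $x$ corresponds exactly to the requirement that neither summand equal $1$, so that a spurious solution arising from $\nu(1 \cdot m) = \nu(m)$ at an excluded endpoint cannot be mistaken for a Goldbach decomposition.
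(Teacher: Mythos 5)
Your proof is correct and takes essentially the same route as the paper's: the strict inequalities $2n+1<x<4n-1$ rule out a factor equal to $1$, the substitution $x=2n+p$ handles the converse direction, and both arguments conclude via $\nu((x-2n)(4n-x))=2$ forcing each factor to be prime. Your write-up merely makes explicit the complete additivity of $\nu$ (that $\nu(uv)=\nu(u)+\nu(v)\geq 2$ with equality iff both factors are prime), a step the paper's proof asserts without justification.
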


\begin{proof}
Indeed, let $x$ satisfy an inequality $2n+1<x<4n-1$ and $\nu((x-2n)(4n-x))=2$. Since $x-2n > 1$, $4n - x > 1$ are true hence $x - 2n, 4n - x$ are primes and $2n$ is a sum of two primes. Now let $x - 2n, 4n - x$ be primes so $2n+1<x<4n-1$. It is important to note that if $2n = p + q$ where $p, q$ are primes then there is such an integer $x$ that $p, q$ can be expressed by $x-2n, 4n-x$.
\end{proof}

\begin{theorem}
Goldbach's conjecture is true iff an equation $\nu((x-2n)(4n-x))=2$ with a condition $2n+1<x<4n-1$ has a solution for every integer $n > 1$.
\end{theorem}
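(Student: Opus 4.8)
The plan is to obtain the theorem as the universally quantified form of Lemma 3.2, so that essentially all of the arithmetic content is already in hand. First I would restate Goldbach's conjecture in its explicitly quantified shape: it is the assertion that for every integer $n > 1$ the even number $2n$ can be written as $p + q$ with $p, q$ prime. Let $P(n)$ denote the predicate ``$2n$ is a sum of two primes'' and let $Q(n)$ denote the predicate ``there exists an integer $x$ with $2n+1 < x < 4n-1$ such that $\nu((x-2n)(4n-x)) = 2$.'' Goldbach's conjecture is then exactly $\forall n > 1\; P(n)$, while the right-hand side of the theorem is exactly $\forall n > 1\; Q(n)$, so the theorem asserts $(\forall n > 1\; P(n)) \iff (\forall n > 1\; Q(n))$.

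Next I would invoke Lemma 3.2, which supplies precisely the pointwise equivalence $P(n) \iff Q(n)$ for each fixed $n > 1$, with the same admissible range $2n+1 < x < 4n-1$ and the same equation $\nu((x-2n)(4n-x)) = 2$ in every instance. Because this biconditional holds at each $n$ separately, conjoining over all $n > 1$ yields $(\forall n > 1\; P(n)) \iff (\forall n > 1\; Q(n))$; this is the elementary observation that a universal quantifier distributes over a biconditional that is valid at every instance. Substituting the two predicates back gives exactly the claimed equivalence.

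The one place I would take care — and the only place where a genuine, if routine, argument is needed — is to confirm that Lemma 3.2's equivalence is sound, namely that on the range $2n+1 < x < 4n-1$ the condition $\nu((x-2n)(4n-x)) = 2$ really is equivalent to both factors being prime. Here the integer constraint forces $x-2n \geq 2$ and $4n-x \geq 2$, so each factor contributes at least one prime (counted with multiplicity) to $\nu$ of the product; the total being exactly $2$ then forces each factor to contribute exactly one, hence each is prime, while conversely two primes give $\nu = 2$. One should also check the correspondence $x = 2n + p$ in the direction $2n = p+q \Rightarrow Q(n)$ lands in the open interval, which it does since $2 \le p \le 2n-2$. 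Since all of this is already carried out in Lemma 3.2, I expect no further obstacle: the theorem is simply the universal closure of that lemma, and there is no additional number-theoretic difficulty to overcome beyond the logical quantification step.
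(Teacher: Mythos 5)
Your proposal is correct and coincides with the paper's own treatment: the paper states this theorem immediately after Lemma 3.2 with no separate proof precisely because, as you observe, it is just the universal closure over $n>1$ of the pointwise equivalence established in that lemma. Your verification of the lemma's content (additivity of $\nu$ forcing each factor $x-2n\geq 2$, $4n-x\geq 2$ to be prime, and the substitution $x=2n+p$ landing in the open interval since $2\leq p\leq 2n-2$) is the same argument the paper gives in its proof of Lemma 3.2, so nothing further is needed.
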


\begin{corollary}
The binary Goldbach conjecture is true iff an equation $\nu(n^2-x^2 )=2$  has a solution for every integer $n>2$ with a condition $0\leq x \leq n-3$.
\end{corollary}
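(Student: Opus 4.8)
The plan is to obtain this directly from Theorem 3.4 through a change of variables that recenters the two prime summands about their common midpoint $n$. Whereas Theorem 3.4 records a decomposition $2n = p + q$ by means of $p = x - 2n$ and $q = 4n - x$, I would parametrize the unordered pair $\{p,q\}$ instead by its half-difference: setting $p = n - x$ and $q = n + x$ enforces $p + q = 2n$ automatically and turns the product into $pq = (n - x)(n + x) = n^2 - x^2$. Because $\nu$ is completely additive (it is the sum of the $p$-adic valuations $\nu_p$, each additive on products), one has $\nu(n^2 - x^2) = \nu(n - x) + \nu(n + x)$, so the corollary's equation splits into a statement about the two individual factors.

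Next I would check that the prescribed range $0 \le x \le n - 3$ is exactly what makes $\nu(n^2 - x^2) = 2$ say that both factors are prime. The upper bound yields $n - x \ge 3$, while $n + x \ge n > 2$ is automatic, so each factor exceeds $1$ and hence satisfies $\nu \ge 1$. Therefore $\nu(n - x) + \nu(n + x) = 2$ forces $\nu(n - x) = \nu(n + x) = 1$, i.e. both $n - x$ and $n + x$ are prime; conversely, if both are prime their $\nu$-values sum to $2$. Thus a solution $x$ in this range corresponds precisely to a pair of primes summing to $2n$.

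The delicate part, and the step I expect to demand the most care, is to align the two ranges so that solvability transfers in both directions and to account for the shift from $n > 1$ in Goldbach to $n > 2$ here. Given a decomposition $2n = p + q$ with $n > 2$, I would first show that both summands are odd: if one were $2$, the other would be $2n - 2$, an even number exceeding $2$ and hence composite. With both odd and labeled so that $p \le q$, the quantity $x = (q - p)/2$ is a nonnegative integer, and $n - x = p \ge 3$ gives $x \le n - 3$, so $x$ lies in the required range with $n - x = p$, $n + x = q$. The reverse implication is immediate from the preceding paragraph. Finally the case $n = 2$, namely $2n = 4 = 2 + 2$, is a sum of two primes unconditionally and imposes no equation, which is exactly why Goldbach over $n > 1$ matches the equation over $n > 2$. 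The one real subtlety underlying all of this is the exclusion of the summand $2$: it is needed both to make $x = (q - p)/2$ an integer and to justify the endpoint $n - 3$ rather than $n - 2$.
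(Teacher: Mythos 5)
Your proof is correct, and at its core it performs the same recentering that the paper does: the paper deduces the corollary from its Theorem 3.3 (the result you cite as ``Theorem 3.4''; the parametrization $p = x - 2n$, $q = 4n - x$ is the paper's Lemma 3.2/Theorem 3.3) via the formal substitution $y = 3n - x$, which is literally your change of variables, since $y - 2n = n - x$ and $4n - y = n + x$. The difference lies in execution and completeness. The paper's two-line proof stops at the identity $\nu((y-2n)(4n-y)) = \nu(n^2 - x^2)$ and simply asserts the range $0 \le x \le n-3$; carried out honestly, the substitution applied to $2n+1 < y < 4n-1$ yields $1-n < x < n-1$, i.e. $|x| \le n-2$, so the stated range needs two further observations the paper never makes: symmetry under $x \mapsto -x$, and the exclusion of the endpoint $x = n-2$ (equivalently, of the summand $2$), which is legitimate only because $2n-2$ is even and composite for $n > 2$ --- precisely the point you isolate as the ``one real subtlety.'' Your direct argument also makes explicit what the paper leaves implicit: the complete additivity giving $\nu(n^2-x^2) = \nu(n-x) + \nu(n+x)$, the check that both factors exceed $1$ on the given range (so $\nu = 2$ forces two primes rather than one factor with $\nu = 2$), the parity argument making $x = (q-p)/2$ integral, and the $n=2$ base case reconciling Goldbach's range $n > 1$ with the corollary's $n > 2$. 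In short: same change of variables, but your self-contained version supplies exactly the range bookkeeping that the paper's own proof elides, and is the stronger write-up for it.
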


\begin{proof}
According to theorem 3.3, $\nu((y-2n)(4n-y))=2$ where $2n+1<y<4n-1$. Making a substitution $y = 3n - x$ we will get $\nu((y-2n)(4n-y))=\nu(n^2-x^2 )=2$ where $0 \leq x \leq n - 3$.
\end{proof}

Using corollary 3.1 we can represent the binary Goldbach conjecture as follows:

\begin{conjecture}
A system of equations involving $\phi$-function
\begin{equation}
\begin{cases}
\phi(n-x)+1=n-x \\
\phi(n+x)+1=n+x
\end{cases}
\end{equation} 
has an integral solution for every natural number $n$ distinct from unity.
\end{conjecture}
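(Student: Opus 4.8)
The plan is to establish that the displayed system is solvable for every $n>1$ \emph{if and only if} the binary Goldbach conjecture holds, thereby folding the statement back into the equivalences already derived in Corollary 3.1 and Theorem 3.3. The engine of the entire argument is the elementary fact recorded in Definition 2.3, namely $\phi(a)=a-1 \iff \nu(a)=1 \iff a \text{ is prime}$; everything else is bookkeeping around this single equivalence.

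First I would rewrite each equation of the system in primality form. Since $\phi(n-x)+1=n-x$ is the same as $\phi(n-x)=(n-x)-1$, that equation holds precisely when $n-x$ is prime, and likewise $\phi(n+x)+1=n+x$ holds precisely when $n+x$ is prime. Hence an integral solution $x$ of the system is exactly a choice for which both $n-x$ and $n+x$ are prime. Because $(n-x)+(n+x)=2n$, any such $x$ exhibits $2n$ as a sum of two primes. Conversely, given any representation $2n=p+q$ with $p\le q$, the common parity of $p$ and $q$ forces $x:=(q-p)/2$ to be a non-negative integer, whereupon $n-x=p$ and $n+x=q$ solve the system; the only case in which a $2$ can occur is $p=q=2$ (so $n=2$, $x=0$), handled directly. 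For each fixed $n$ this shows that solvability of the system is equivalent to $2n$ being a sum of two primes.

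Next I would align this with Corollary 3.1 to make the reduction transparent. Using the complete additivity of $\nu$ one has $\nu(n^2-x^2)=\nu((n-x)(n+x))=\nu(n-x)+\nu(n+x)$, so the corollary's condition $\nu(n^2-x^2)=2$ asks this sum to equal $2$. Under the range restriction $0\le x\le n-3$ we obtain $n-x\ge 3$, so neither factor equals $1$ and the decomposition $0+2$ is excluded; the sum can then be $2$ only as $1+1$, that is, only when both $n-x$ and $n+x$ are prime. Thus the corollary's condition is literally the primality statement produced above, and quantifying over all $n$ closes the chain Lemma 3.2 $\Rightarrow$ Theorem 3.3 $\Rightarrow$ Corollary 3.1 $\Rightarrow$ the present system. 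Small values such as $n=2$ (where the corollary's range is empty) are absorbed by the direct check already made.

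The genuinely hard part is not this translation but the existential claim that a solution $x$ \emph{exists} for \emph{every} $n>1$. The reformulation is purely algebraic and contributes no new arithmetic content: the Fermat-congruence and $\phi$-function rewritings encode primality faithfully, but do not themselves guarantee that the symmetric pair $n\pm x$ can always be made simultaneously prime. Establishing that existence is precisely the binary Goldbach conjecture, and I expect no manipulation of $\phi$ or $\nu$ to settle it; an actual proof would require substantial analytic or sieve-theoretic input (in the spirit of the Hardy--Littlewood heuristics for $2n=p+q$), which lies well beyond the formal equivalence developed here.
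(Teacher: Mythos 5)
Your proposal is correct and takes essentially the same route as the paper: the paper presents this statement as a \emph{conjecture} obtained from Corollary 3.1 via the equivalence $\phi(a)+1=a \iff \nu(a)=1 \iff a$ is prime, so the only thing that can be (and is) established is the equivalence with the binary Goldbach conjecture, which your translation through $n-x$, $n+x$, and $\nu(n^2-x^2)=2$ reproduces exactly, with the small cases $n=2,3$ handled directly. You are also right to refuse to ``prove'' existence --- the paper offers no proof either, precisely because the statement is the binary Goldbach conjecture in disguise.
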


The following statement links Goldbach's conjecture with a system of Fermat's congruences:

\begin{theorem}
A system of equations involving $\phi$-function
\begin{equation}
\begin{cases}
\phi(n-x)+1=n-x \\
\phi(n+x)+1=n+x 
\end{cases}
\end{equation}

has an integral solution for every natural number $n$ great than three iff for every $n$ exists such an integer $x$, $0  \leq x  \leq n - 3$, that $x$ is a solution of a system of the Fermat congruences

\begin{equation}
\begin{cases}
(n-x)^{\phi(p_i)} \equiv 1 \mod p_i \\ 
(n+x)^{\phi(q_j)} \equiv 1 \mod q_j 
\end{cases}
\end{equation}
where $p_i, q_j$ run all primes $p \leq[\sqrt{n-x}]<\sqrt{n-x}\leq\sqrt{n}, q \leq [\sqrt{n+x}]<\sqrt{n+x} \leq \sqrt{2n-3}$.
\end{theorem}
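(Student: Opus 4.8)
The plan is to prove the stated equivalence componentwise, treating each of the two equations exactly in the manner of Theorem 3.1. The governing observation is that both systems encode one and the same arithmetic condition on the chosen $x$, namely that $n-x$ and $n+x$ are simultaneously prime; the two systems merely express this primality in two different languages --- through the totient identity $\phi(m)+1=m$ on one side, and through trial division recast as Fermat congruences on the other. Since the two equations involve disjoint families of moduli and the independent integers $n-x$ and $n+x$, I would establish the two biconditionals $\phi(n-x)+1=n-x \iff (n-x)^{\phi(p_i)}\equiv 1 \bmod p_i$ for all admissible $p_i$, and $\phi(n+x)+1=n+x \iff (n+x)^{\phi(q_j)}\equiv 1 \bmod q_j$ for all admissible $q_j$, separately, and then conjoin them for a common $x$ with $0\le x\le n-3$.

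For the forward implication, I would suppose $x_0$ with $0\le x_0\le n-3$ solves the $\phi$-system. By the property $\phi(a)=a-1 \iff \nu(a)=1 \iff a$ is prime recorded in Definition 2.3, both $m_1:=n-x_0$ and $m_2:=n+x_0$ are primes. For a prime $m$ and any prime $p<m$ one has $p\nmid m$, so Fermat's little theorem yields $m^{p-1}=m^{\phi(p)}\equiv 1 \bmod p$; applying this to $m_1$ with every prime $p_i\le[\sqrt{m_1}]$ and to $m_2$ with every prime $q_j\le[\sqrt{m_2}]$ shows that $x_0$ solves the Fermat system. The range bounds come for free: $x_0\ge 0$ gives $\sqrt{n-x_0}\le\sqrt{n}$, while $x_0\le n-3$ gives $n+x_0\le 2n-3$ and hence $\sqrt{n+x_0}\le\sqrt{2n-3}$.

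For the converse, I would suppose $x_0$ solves the Fermat system. The key elementary fact is that for a prime modulus $p$ the congruence $m^{\phi(p)}\equiv 1 \bmod p$ holds \emph{iff} $p\nmid m$: if $p\mid m$ then $m^{\phi(p)}\equiv 0\not\equiv 1$, while if $p\nmid m$ it holds by Fermat. Hence the first family of congruences asserts precisely that no prime $p_i\le[\sqrt{n-x_0}]$ divides $n-x_0$, and the second that no prime $q_j\le[\sqrt{n+x_0}]$ divides $n+x_0$. Because $0\le x_0\le n-3$ forces $3\le n-x_0$ and $n+x_0\ge n>1$, trial division --- an integer $m>1$ with no prime factor $\le\sqrt{m}$ is prime --- makes both $n-x_0$ and $n+x_0$ prime. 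Then $\nu(n-x_0)=\nu(n+x_0)=1$, so $\phi(n-x_0)+1=n-x_0$ and $\phi(n+x_0)+1=n+x_0$; that is, $x_0$ solves the $\phi$-system.

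I do not expect a genuine obstacle: the statement is a reformulation, not a resolution, of Goldbach, so the entire content is the dictionary ``$\phi(m)+1=m \iff m$ prime $\iff$ Fermat congruences hold for all primes $\le\sqrt{m}$,'' which Theorem 3.1 already deployed for a single equation. The one point demanding care is the bookkeeping: the index sets $\{p_i\}$ and $\{q_j\}$ depend on the unknown $x$ through the bounds $[\sqrt{n-x}]$ and $[\sqrt{n+x}]$, so each congruence family must be phrased relative to the specific $x_0$ under consideration, and the two families kept logically independent before being conjoined. No claim is made that such an $x_0$ exists --- that is precisely the binary Goldbach conjecture --- only that its existence is equivalent across the two formulations.
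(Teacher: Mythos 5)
Your proposal is correct and takes essentially the same route as the paper's own proof: primality of $n-x_0$ and $n+x_0$ yields the Fermat congruences via the little theorem, and conversely the congruences rule out every prime divisor up to $\sqrt{n-x_0}$ and $\sqrt{n+x_0}$, so trial division gives primality and hence the $\phi$-equations. You even fill in details the paper leaves implicit (the ``iff $p\nmid m$'' direction and the range bookkeeping, which the paper delegates to Corollary~3.1), so no substantive difference or gap to report.
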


\begin{proof}
Indeed, let a system of equations have a solution for any natural number $n$ great than three. According to corollary 3.1 for given $n$ there is $x_0$, $0 \leq x_0 \leq n - 3$ and let $x_0 < n - 3$ as $n - x_0, n + x_0$ are primes so $x_0$ is a solution for a system of the Fermat congruences for all primes $p_i$ and $q_j$ satisfying the inequalities $p_i \leq [\sqrt{n-x_0}]<\sqrt{n-x_0}, q_j \leq [\sqrt{n+x_0}]<\sqrt{n+x_0}$ and so on. Now let for any given $n$ there is such an integer $x_0$, $0\leq x_0 \leq n - 3$ and let $x_0 < n - 3$ is a solution of a system of Fermat's congruences $(n-x)^{\phi(p_i)} \equiv 1 \mod p_i, (n+x)^{\phi(q_j)} \equiv 1 \mod q_j$ for all primes $p_i$ and $q_j$ satisfying the inequalities $p_i \leq [\sqrt{n-x_0}]< \sqrt{n-x_0}, q_j \leq [\sqrt{n+x_0}]< \sqrt{n+x_0}$ then according to trial division $n-x_0,n+x_0$ are primes so a system of equations has a solution and so on.
\end{proof}

\subsubsection{The Ternary Goldbach Theorem}
\hfill\\\\
The ternary Goldbach theorem: Every odd integer $n, n > 5$, is a sum of three primes. 

For the ternary Goldbach theorem it is possible the following alternative form:

\begin{lemma}
Every odd integer $n, n > 5$, is a sum three prime numbers if and only if there are such the integers $x, y$ where $0 \leq y<x<x+y+2<n+1<2x$ that $\nu((n-x-y)(2x-n)(n-x+y)) = 3$.
\end{lemma}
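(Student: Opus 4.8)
The plan is to mirror the reasoning behind the binary Goldbach lemma (Lemma 3.2), the crucial mechanism being that $\nu$ is completely additive (immediate from Definition 2.2, since $\nu_p(ab)=\nu_p(a)+\nu_p(b)$) together with the fact that a product of factors, each exceeding $1$, whose total $\nu$-value equals the number of factors must be a product of primes. The three factors are arranged so that their sum telescopes: $(n-x-y)+(2x-n)+(n-x+y)=n$. Hence as soon as each factor is prime we obtain a representation of $n$ as a sum of three primes, and conversely.

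For the \emph{sufficiency} direction I would first extract from the chain $0\le y<x<x+y+2<n+1<2x$ that each factor is at least $2$. The step $n+1<2x$ gives $2x-n>1$; the step $x+y+2<n+1$ gives $x+y<n-1$, hence $n-x-y>1$; and $y\ge 0$ yields $n-x+y\ge n-x-y>1$. Complete additivity then gives $\nu\big((n-x-y)(2x-n)(n-x+y)\big)=\nu(n-x-y)+\nu(2x-n)+\nu(n-x+y)$, a sum of three integers each $\ge 1$. Forcing this sum to equal $3$ makes every term equal to $1$, so by the property $\nu(a)=1\iff a$ is prime the three factors are primes summing to $n$.

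For the \emph{necessity} direction I would start from any representation $n=a+b+c$ with $a,b,c$ prime and invert the parametrization. Setting $q=2x-n$, $p=n-x-y$, $r=n-x+y$ one finds $x=(n+q)/2$ and $y=(r-p)/2$, with $p+q+r=n$. The essential parity bookkeeping is that, $n$ being odd, its prime summands are either all odd or else exactly one odd with the other two both equal to $2$ (for instance $7=2+2+3$). In either case I designate the odd summand as $q$ and the remaining two, which share a common parity, as $p\le r$; then $n+q$ and $r-p$ are even, so $x,y$ are integers with $y\ge 0$. A short computation gives $x+y=n-p$, whence $x+y+2=n-p+2<n+1$ because $p\ge 2$, while $2x=n+q\ge n+3>n+1$ and $r<n$ secures $y<x$; the remaining inequalities are immediate, and $\nu(pqr)=3$.

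The step I expect to be the main obstacle is precisely this parity case analysis in the necessity direction: one must confirm that the degenerate representation forced at $n=7$, with two copies of $2$, still fits the scheme. It does, because that case collapses to $y=0$ and $p=r=2$ with $q$ the lone odd prime; checking integrality of $x,y$ and the strictness of each inequality at this boundary is the delicate point, whereas sufficiency and the telescoping identity are routine.
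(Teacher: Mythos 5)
Your proof is correct and follows essentially the same route as the paper's: the telescoping parametrization $(n-x-y)+(2x-n)+(n-x+y)=n$, complete additivity of $\nu$ forcing each of the three factors (each shown $>1$ from the inequality chain) to be prime, and the inversion $x=(n+q)/2$, $y=(r-p)/2$ for the converse. If anything, your necessity direction is more complete than the paper's, which merely asserts that suitable $x,y$ exist for any representation $n=p+q+r$; your explicit parity bookkeeping --- designating an odd summand as $q$ so that $x\in\mathbb{Z}$, and a same-parity pair $p\le r$ so that $y\in\mathbb{Z}$, $y\ge 0$, including the boundary case $7=2+2+3$ --- fills in exactly the integrality details the paper leaves unstated.
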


\begin{proof} 
Indeed, let an integer $n, n > 5$, be given and there are such $x, y$ that  $0 \leq y<x<x+y+2<n+1<2x$ and  $\nu((n-x-y)(2x-n)(n-x+y))=3$. Then $n - x - y \geq 2$ and $2x - n \geq 2$ hence that $n - x - y, 2x - n, n - x + y$ are primes and $n$ is a sum of three primes. Now let $n - x - y = p$, $2x - n = q$, $n - x + y = r$  where  $p, q, r$ are primes. As $n = x + y + p$ so $n > x + y + 1$, as $2x = n + q$ so $2x>n+1$ in addition $x > y$. Thus we have $0 \leq y<x<x+y+2<n+1<2x$. It is important to note that if $n = p + q + r$ where $p, q$ and $r$ are primes then there are integers $x, y$ such that $p, q$ and $r$ can be expressed by $n-x-y, 2x-n$ and $n-x+y$.
\end{proof}

Using lemma 3.3 we can represent the ternary Goldbach therem as follows:

\begin{theorem}
A system of equations involving $\phi$-function
\begin{equation}
\begin{cases}
      \phi(n-x-y)+1=n-x-y \\
       \phi(2x-n)+1=2x-n \\
 \phi(n-x+y)+1=n-x+y 
 \end{cases}
 \end{equation}
has a solution $(x, y)$ in non - negative integers for every odd integer $n$ greater than five.
\end{theorem}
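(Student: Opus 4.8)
The plan is to peel the stated $\phi$-system down to three simultaneous primality conditions, then to match those conditions to the hypothesis of Lemma 3.3, and finally to invoke the ternary Goldbach theorem to supply the required pair $(x,y)$. The elementary engine is the equivalence recorded after Definition 2.3: for a natural number $a$, $\phi(a)+1=a$ holds precisely when $\nu(a)=1$, i.e. precisely when $a$ is prime. Reading the three lines of the system through this equivalence shows that a non-negative integer pair $(x,y)$ solves the system if and only if each of $n-x-y$, $2x-n$ and $n-x+y$ is a positive prime.

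First I would fix an odd $n>5$ and verify that ``solution of the $\phi$-system'' and ``admissible pair for Lemma 3.3'' are the same data. In one direction, suppose $n-x-y$, $2x-n$, $n-x+y$ are all prime; each is then $\ge 2$, whence $x+y+2\le n<n+1$, $2x\ge n+2>n+1$, and $y\le n-2-x<x$ (using $2x>n-2$). Together with the imposed $y\ge 0$ these are exactly the inequalities $0\le y<x<x+y+2<n+1<2x$ of Lemma 3.3, and since $\nu$ is additive over products the three primalities give $\nu\bigl((n-x-y)(2x-n)(n-x+y)\bigr)=3$. In the other direction, any admissible pair with $\nu$-product equal to $3$ forces each factor, being $\ge 2$, to have $\nu=1$ and hence to be prime, which is a solution of the system.

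It then remains to produce such a pair for every odd $n>5$. By the converse half of Lemma 3.3 this is equivalent to a representation $n=p+q+r$ with $p,q,r$ prime, and the existence of this representation for every odd $n>5$ is exactly the ternary Goldbach theorem, which I would cite. To finish I would exhibit the pair explicitly: choosing $q$ to be an odd prime among $p,q,r$ (possible because their sum $n$ is odd, so they cannot all equal the only even prime $2$) makes $x=(n+q)/2$ an integer, and ordering the remaining two primes as the smaller $p$ and the larger $r$ makes $y=(r-p)/2$ a non-negative integer; one then checks $n-x-y=p$, $2x-n=q$, $n-x+y=r$.

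The step I expect to be the genuine obstacle is the last one. Everything preceding it is bookkeeping that re-expresses three primality requirements as $\phi$-equations and as a single $\nu$-product, with a small parity check needed to guarantee that $x$ and $y$ come out as integers; none of this gives any independent purchase on the arithmetic. All the weight rests on the ternary Goldbach theorem itself, and the $\phi$-reformulation offers no shortcut around its proof: the present statement is a translation of that theorem rather than a new derivation of it.
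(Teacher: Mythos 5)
Your proposal is correct and takes essentially the same route as the paper: Theorem 3.5 is obtained by translating the $\phi$-equations into three primality conditions via Lemma 3.3 and then citing Helfgott's ternary Goldbach theorem, which is exactly the paper's one-line proof. You additionally spell out what the paper leaves implicit, namely the explicit pair $x=(n+q)/2$, $y=(r-p)/2$ together with the parity check (choose $q$ odd, so $p+r=n-q$ is even) that guarantees $(x,y)$ are non-negative integers.
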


\begin{proof}
Theorem 3.5 is a direct consequence of the work \cite{Helfgott:aa}.
\end{proof}

Consider a peculiar case of the ternary Goldbach theorem, namely:

\begin{conjecture}
 A system of equations involving $\phi$ - function is complemented with a congruence 
 \begin{equation}
 \begin{cases}
 \phi(n-x-y)+1=n-x-y \\
 \phi(2x-n)+1=2x-n \\
 \phi(n-x+y)+1=n-x+y \\
 (n-x-y)(2x-n) \equiv 0 \mod 3
\end{cases}
\end{equation}
has a solution $(x, y)$ in non - negative integers for every odd integer $n$ greater than five.
\end{conjecture}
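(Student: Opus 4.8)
The plan is to decode the system via the equivalence $\phi(a)+1=a \iff a$ is prime recorded in Definition 2.3, and then to show that the extra congruence forces the problem to collapse onto the binary Goldbach conjecture of Conjecture 3.1. First I would read off from the three $\phi$-equations that a non-negative solution $(x,y)$ makes $p=n-x-y$, $q=2x-n$ and $r=n-x+y$ simultaneously prime, with $p+q+r=n$. The appended condition $(n-x-y)(2x-n)\equiv 0\pmod 3$ is then simply $pq\equiv 0\pmod 3$, and since $p$ and $q$ are prime this is equivalent to $p=3$ or $q=3$.

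Next I would analyse the two branches and verify that they carry the same arithmetic content. In the branch $q=3$ one has $2x-n=3$, so $x=(n+3)/2$, which is an integer exactly because $n$ is odd; the remaining two equations then require $p=(n-3)/2-y$ and $r=(n-3)/2+y$ to be prime, that is, they require the even number $n-3$ to split as a sum of two primes symmetric about its half, with $y=(r-p)/2\ge 0$. The branch $p=3$ gives, after the symmetric substitution, $q+r=n-3$, again a two-prime splitting of $n-3$. Thus, for a fixed odd $n$, the augmented system is solvable if and only if $n-3$ is a sum of two primes.

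I would then check that such a two-prime splitting really yields an admissible pair $(x,y)$ for Lemma 3.3, namely $0\le y<x<x+y+2<n+1<2x$. With $x=(n+3)/2$ one obtains $2x=n+3>n+1$ at once, and $x+y=n-p$ gives $x+y+2=n-p+2<n+1$ precisely because $p\ge 2$; the inequalities $0\le y<x$ then follow from $p,r\ge 2$ after ordering the two primes so that $r\ge p$. These verifications are routine and use only the bounds $p,q,r\ge 2$ already guaranteed by primality.

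The upshot is that this conjecture is \emph{equivalent} to the binary Goldbach conjecture: as $n$ ranges over the odd integers greater than five, $n-3$ ranges over all even integers at least four, so universal solvability of the augmented system is literally the assertion of Conjecture 3.1. The step I expect to be the genuine obstacle is therefore not any of the above bookkeeping but the binary Goldbach conjecture itself, which remains open; in particular the already-proven ternary theorem (Theorem 3.5) cannot be invoked here, since it produces three summand primes with no control over whether any of them equals $3$. Consequently the honest status of the statement is conditional: any proof of Conjecture 3.1 would, through the reduction above, establish it immediately.
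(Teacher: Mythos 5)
Your reduction is correct, and it matches what the paper itself does with this statement: note that the statement is labeled a \emph{conjecture} precisely because the paper does not (and cannot) prove it outright. What the paper offers instead is Theorem 3.6, asserting equivalence with the binary Goldbach conjecture, resting on Proposition 3.1 whose proof is waved away as ``inductive and quite obvious.'' Your write-up supplies exactly the bookkeeping the paper omits: decoding $\phi(a)+1=a$ as primality of $a$ (via Definition 2.3), observing that the appended congruence with $p=n-x-y$ and $q=2x-n$ prime says $p=3$ or $q=3$, and concluding that solvability for a fixed odd $n$ is equivalent to $n-3$ being a sum of two primes, which as $n$ ranges over odd integers greater than five is precisely Conjecture 3.1. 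You also handle, at least implicitly, a parity point the paper never mentions: since $n$ is odd, the slot $q=2x-n$ is forced to be odd, so when constructing a solution from a splitting $n-3=p'+r'$ one must place the prime $3$ in the $q$-slot, taking $x=(n+3)/2$ and $y=(r'-p')/2$, which is an integer because $n-3$ even forces $p'$ and $r'$ to have equal parity; your branch analysis and your verification of the inequalities of Lemma 3.3 are sound. Your closing assessment is the honest one and agrees with the paper's structure: Helfgott's ternary theorem (Theorem 3.5) gives no control over whether a summand equals $3$, so the statement's status is conditional on the binary Goldbach conjecture --- exactly the content of Theorem 3.6, which you have here actually proved rather than merely asserted.
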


The next statement links a peculiar case of one with the binary Goldbach conjecture: 

\begin{proposition}
Every odd integer greater than five can be represented as a sum of three primes so that at least one of them is a prime number 3 if and only if every even integer greater than two can represent as a sum of two primes.
\end{proposition}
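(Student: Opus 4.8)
The plan is to prove both implications by exploiting the elementary bijection between the odd integers greater than five and the even integers greater than two given by $m \mapsto m-3$. Since $3$ is itself prime, a representation of an odd number $m$ as a sum of three primes one of which equals $3$ carries exactly the same information as a representation of the even number $m-3$ as a sum of two primes, so the whole proposition should reduce to checking that this correspondence respects the stated ranges.

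First I would set up the correspondence carefully. If $m$ is odd with $m>5$, then $m\geq 7$, so $m-3$ is even and $m-3\geq 4>2$; conversely, if $2n$ is even with $2n>2$, then $2n\geq 4$, so $2n+3$ is odd and $2n+3\geq 7>5$. Hence $m\mapsto m-3$ is a bijection from the odd integers greater than five onto the even integers greater than two, with inverse $2n\mapsto 2n+3$. This is the one place where I would be deliberate about boundaries, to avoid any off-by-one gap.

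Next I would dispatch the two directions. For the forward implication, assume every odd $m>5$ is a sum of three primes at least one of which is $3$. Given an even integer $2n>2$, apply the hypothesis to the odd integer $m=2n+3>5$ to obtain $m=p_1+p_2+p_3$ with, after relabeling, $p_3=3$; then $2n=m-3=p_1+p_2$ exhibits $2n$ as a sum of two primes. For the reverse implication, assume every even $2n>2$ is a sum of two primes. Given an odd $m>5$, the number $m-3$ is even and exceeds two, so $m-3=p+q$ with $p,q$ prime; adjoining the prime $3$ yields $m=3+p+q$, the desired representation of $m$ as a sum of three primes one of which is $3$.

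The argument is essentially a bookkeeping of parity and ranges, so I do not anticipate a genuine obstacle. The only point requiring any care is the boundary matching described above, together with the observation that the primes produced may legitimately equal $2$, so that the extremal cases such as $4=2+2$ and $7=3+2+2$ are properly covered rather than excluded by an unwarranted assumption that the summands be odd.
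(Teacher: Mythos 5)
Your proof is correct and complete, but it cannot be compared against the paper's argument in any substantive way, because the paper gives none: its entire proof reads ``The proof is inductive and quite obvious.'' Your direct correspondence $m \mapsto m-3$ between odd integers greater than five and even integers greater than two is the natural argument, and it shows in particular that the paper's appeal to induction is misplaced --- nothing is being inducted on, since the equivalence is a pointwise translation of representations: $m = 3 + p + q$ with $p, q$ prime if and only if $m - 3 = p + q$, with the range check $m \geq 7 \iff m - 3 \geq 4$ handled exactly as you do. Your closing remark about admitting the prime $2$ among the summands (so that $7 = 3 + 2 + 2$ corresponds to $4 = 2 + 2$) is a genuinely worthwhile detail: a careless reader might assume the two remaining primes are odd, which would silently exclude the smallest cases. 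In short, you have supplied the proof the paper merely asserts, and done so without the unnecessary inductive framing.
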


\begin{proof}
The proof is inductive and quite obvious.
\end{proof}

\begin{theorem}
 The peculiar case of the ternary Goldbach theorem is true if and only if the binary Goldbach conjecture is true.
\end{theorem}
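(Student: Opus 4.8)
The plan is to route the equivalence through the combinatorial reformulation already isolated in Proposition 3.1, which asserts that the binary Goldbach conjecture is equivalent to the statement, call it (C), that every odd integer $n > 5$ is a sum of three primes at least one of which equals $3$. It therefore suffices to prove that the peculiar case of Conjecture 3.2 --- the solvability of its four-line $\phi$-system in non-negative integers for every odd $n > 5$ --- is itself equivalent to (C). To set up the dictionary I would write $p = n - x - y$, $q = 2x - n$, $r = n - x + y$, so that $p + q + r = n$ identically. By the property recorded in Definition 2.3 that $\phi(a) + 1 = a$ holds exactly when $a$ is prime (equivalently $\nu(a) = 1$), the three $\phi$-equations say precisely that $p, q, r$ are primes; since each is then at least $2$, the mere non-negativity of $x, y$ already reproduces the full inequality chain $0 \le y < x < x + y + 2 < n + 1 < 2x$ of Lemma 3.3. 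Finally the appended congruence $(n - x - y)(2x - n) \equiv 0 \pmod 3$ says $3 \mid pq$, which for primes $p, q$ means $p = 3$ or $q = 3$.

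First I would dispatch the forward implication. If the system of Conjecture 3.2 is solvable for every odd $n > 5$, then reading $p, q, r$ off any solution exhibits $n = p + q + r$ as a sum of three primes one of which (namely $p$ or $q$) equals $3$; this is exactly (C), whence Proposition 3.1 yields the binary Goldbach conjecture.

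For the converse I would assume the binary Goldbach conjecture, invoke Proposition 3.1 to obtain for each odd $n > 5$ a representation $n = 3 + a + b$ with $a, b$ prime, and then manufacture an explicit solution. The decisive design choice is to place the prime $3$ into the slot $q = 2x - n$, whose value must match the parity of $n$: setting $q = 3$ forces $x = (n + 3)/2$, an integer precisely because $n$ is odd. Taking $p = \min(a, b)$ and $r = \max(a, b)$ then gives $y = (n - 3)/2 - p$, and here $y \ge 0$ holds because $p \le (a + b)/2 = (n - 3)/2$. Thus $(x, y)$ is a pair of non-negative integers satisfying all four conditions of the system, the congruence being met since $q = 3$, so the peculiar case holds.

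The step I expect to be the main obstacle is precisely this converse construction, because it must reconcile two constraints at once: the integrality of $x$, which dictates that the prime $3$ be assigned to the parity-constrained slot $q = 2x - n$ rather than to $r$, and the sign condition $y \ge 0$, which dictates that $p$ be chosen as the smaller of the two remaining primes. I would check the boundary case $n = 7 = 3 + 2 + 2$ by hand --- there $a = b = 2$, giving $x = 5$ and $y = 0$ --- to confirm that the construction behaves correctly when the two remaining primes are forced equal and $y$ sits at its extreme value $0$.
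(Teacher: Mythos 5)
Your proof is correct and takes the same route as the paper, whose entire proof reads ``Theorem 3.6 is a direct consequence of Proposition 3.1''; you simply execute in full the reduction that the paper merely asserts, translating the four-line $\phi$-system with its congruence into the statement that every odd $n>5$ is a sum of three primes one of which is $3$, and back. The details you supply --- the parity argument forcing $3$ into the slot $q=2x-n$ via $x=(n+3)/2$, the choice $p=\min(a,b)$ guaranteeing $y\geq 0$, and the observation that primality of the three values plus non-negativity of $x,y$ already recovers the inequality chain of Lemma 3.3 --- are exactly what the paper leaves implicit, and they are all sound.
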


\begin{proof}
Theorem 3.6 is a direct consequence of proposition 3.1.
\end{proof}

``Almost all'' odd integers great than five can be written as the sum of three primes so that at least one of them is a prime number 3 (in the sense that the fraction of odd numbers which can be so written tends towards 1). That it is quite obvious due to the works: van der Corput, 1937; Estermann, 1938; Tchudakoff, 1938 and etc. In the case of odd integers it is known every odd integer greater than five can be represented as a sum of three primes. Helfgott, 2014.
 
However a possibility of a presentation of the binary Goldbach conjecture as a peculiar case of the ternary Goldbach theorem puts a number of the questions before circle and sieve methods as in these methods of solving of Goldbach's conjectures the prime numbers are deprived of personal character. Here we can note a quality jump under passing from the ternary Goldbach theorem to the binary Goldbach conjecture which scarcely can overcome by estimating sums over primes.

\section{Conclusion}

We have explored the possibility of reformulating certain problems about primes as the existence of integral solutions of the systems of equations involving the Euler $\phi$ - function.

We have been successful at doing so for the ternary and binary Goldbach conjectures (with binary being an amended system of the ternary).

While solving the systems of equations involving the Euler $\phi$ - function is far from evident, some success has been achieved by studying Fermat's congruences with varying boundary conditions. Nonetheless, it offers alternative path to existing methods, which may be worth exploring.

\bibliography{references} 
\bibliographystyle{IEEEtran}

\end{document}